\newtheorem{theorem}{Theorem}[section]
\newtheorem{lemma}[theorem]{Lemma}
\numberwithin{equation}{section}
\begin{document}

\title[A Hasse-type principle and its applications]{A Hasse-type principle for exponential diophantine equations and its applications}

\author{Csan\'ad Bert\'ok}
\address{Institute of Mathematics\\
University of Debrecen\\
H-4010 Debrecen, P.O. Box 12\\
Hungary}
\email{bertok.csanad@gmail.com}

\author{Lajos Hajdu}
\address{Institute of Mathematics\\
University of Debrecen\\
H-4010 Debrecen, P.O. Box 12\\
Hungary}
\email{hajdul@science.unideb.hu}
\thanks{Research supported in part by the OTKA grants K100339 and NK101680, and by the T\'AMOP-4.2.2.C-11/1/KONV-2012-0001 project. The project has been supported by the European Union, co-financed by the European Social Fund.}

\subjclass[2010]{11D61, 11D72, 11D79}

\keywords{Exponential diophantine equations, Hasse-principle, Carmichael's function}

\date{}

\begin{abstract}
We propose a conjecture, similar to Skolem's conjecture, on a Hasse-type principle for exponential diophantine equations. We prove that in a sense the principle is valid for "almost all" equations. Based upon this we propose a general method for the solution of exponential diophantine equations. Using a generalization of a result of Erd\H{o}s, Pomerance and Schmutz concerning Carmichael's $\lambda$ function, we can make our search systematic for certain moduli needed in the method.
\end{abstract}

\maketitle

\section{Introduction}

Let $a_1,\dots,a_k$, $b_{11},\dots,b_{1\ell},\dots,b_{k1},\dots,b_{k\ell}$ be non-zero integers, $c$ be an integer, and consider the exponential diophantine equation
\begin{equation}
\label{eq1}
a_1 b_{11}^{\alpha_{11}}\dots b_{1\ell}^{\alpha_{1\ell}}+\dots+
a_k b_{k1}^{\alpha_{k1}}\dots b_{k\ell}^{\alpha_{k\ell}}=c
\end{equation}
in non-negative integers $\alpha_{11},\dots,\alpha_{1\ell},\dots,
\alpha_{k1},\dots,\alpha_{k\ell}$.

The effective and ineffective theory of \eqref{eq1} has a long history. In case of $k=2$, one can apply Baker's method to give explicit bounds for the exponents $\alpha_{11},\dots,\alpha_{1\ell}, \alpha_{21},\dots,\alpha_{2\ell}$; see e.g. results of Gy\H{o}ry \cite{gy1,gy2}. Note that by results of Vojta \cite{v} and Bennett \cite{be}, the solutions to \eqref{eq1} can still be "effectively determined" for $k=3,4$, under some further restrictive assumptions. On the other hand, it is also known that for any $k$, the number of those solutions to equation \eqref{eq1} for which the left hand side of has no vanishing subsum is finite, and it can be bounded explicitly in terms of $k$ and $\ell$ (see \cite{ess} and \cite{av}, and the references given there).

In this paper we propose the following

\vskip.2truecm

\noindent
{\bf Conjecture.} Suppose that equation \eqref{eq1} has no solutions. Then there exists an integer $m$ with $m\geq 2$ such that the congruence
\begin{equation}
\label{eq2}
a_1 b_{11}^{\alpha_{11}}\dots b_{1\ell}^{\alpha_{1\ell}}+\dots+
a_k b_{k1}^{\alpha_{k1}}\dots b_{k\ell}^{\alpha_{k\ell}}\equiv c\pmod{m}
\end{equation}
has no solutions in non-negative integers $\alpha_{11},\dots,\alpha_{1\ell},\dots,
\alpha_{k1},\dots,\alpha_{k\ell}$.

\vskip.2truecm

\noindent
The conjecture is a variant of a classical conjecture of Skolem \cite{sk}. Note that the original formulation of Skolem is not completely precise; for an exact formulation one should e.g. see \cite{sch1}, pp. 398--399. If true, then the conjecture can be considered as a Hasse-type principle for exponential diophantine equations. There are several results in the literature about Skolem's conjecture; we only mention a theorem of Schinzel \cite{sch1} and a recent paper of Bartolome, Bilu and Luca \cite{bbl}, and the references given there.

The results of Schinzel \cite{sch1} also imply that in case of $k=1$ our conjecture is true. In this paper first we show that for any fixed $a_1,\dots,a_k$, $b_{11},\dots,b_{1\ell},\dots,b_{k1},\dots,b_{k\ell}$, the set of integers $c$ for which the above conjecture fails, has density zero even inside the set of those values $c$ for which equation \eqref{eq1} is not solvable. Moreover, here the appropriate moduli $m$ can be chosen to have the extra property that they are all divisible by $r$, for any preliminary chosen integer $r$. The main tools in the proof are a generalization of a classical result of Erd\H{o}s, Pomerance and Schmutz \cite{eps} concerning small values of Carmichael's $\lambda$-function, and a result of \'Ad\'am, Hajdu and Luca \cite{ahl} about the number of values $c$ up to any $x$, for which equation \eqref{eq1} is solvable. Further, we also give some "numerical evidence" for the conjecture, by checking its validity in different settings, and for a relatively large set of the parameters involved.

As an application, we present a general method for the solution of concrete equations of the type \eqref{eq1} under certain assumptions. Namely, if the Conjecture is true, then assuming that \eqref{eq1} has only finitely many solutions, our method makes it possible to find all these solutions, at least in principle. In fact the assumption about the finiteness of solutions can be relaxed. To illustrate the method, we present some concrete examples, as well. We mention that in the literature one can find several sparse results of this type. For example, Alex, Brenner and Foster in a series of papers (see e.g. \cite{bf,af1,af2} and the references there) solved several equations of type \eqref{eq1}, with typically $k=4,5$ and choices of $b_{11},\dots,b_{1\ell},\dots,b_{k1},\dots,b_{k\ell}$ as small primes. However, their way to find appropriate moduli like in \eqref{eq2} is rather ad-hoc, while in our method such moduli can be constructed systematically, based upon generalizations of arguments of Erd\H{o}s, Pomerance and Schmutz \cite{eps}.

Finally, we mention that we have implemented our algorithm in Sage \cite{sage}. The program, together with a complete description can be downloaded from the link www.math.unideb.hu/$\sim$hajdul/expeqsolver.zip.

\section{New results}

In our first result we show that the Conjecture formulated in the Introduction is true for "almost all" cases. For the precise formulation, we need the following notion. If $A\subseteq B\subseteq\mathbb Z$ and $B$, then the density of $A$ inside $B$ is defined as
$$
\lim\limits_{x\to\infty}\ {\frac{\#\{a\in A\ :\ |a|\leq x\}}{\#\{b\in B\ :\ |b|\leq x\}}},
$$
if the limit exists. Here and later on, $\# C$ denotes the number of elements of a set $C$.

\begin{theorem}
\label{thm1}
Let $a_1,\dots,a_k$ and $b_{11},\dots,b_{1\ell},\dots,b_{k1},\dots,b_{k\ell}$ be fixed, and let $H$ be the set of right hand sides in \eqref{eq1} for which the Conjecture is violated, that is
$$
H=\{c\in{\mathbb Z}\ :\ \eqref{eq1}\ \text{is not solvable, but}\ \eqref{eq2}\ \text{is solvable for all}\ m\}.
$$
Then $H$ has density zero inside the set
$$
H_0=\{c\in{\mathbb Z}\ :\ \eqref{eq1}\ \text{is not solvable}\}.
$$
\end{theorem}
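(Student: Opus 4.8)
The plan is to bound the numerator $\#\{c\in H:\ |c|\le x\}$ by $o(x)$ and to deduce from the counting result of \'Ad\'am, Hajdu and Luca \cite{ahl} that the denominator $\#\{c\in H_0:\ |c|\le x\}$ is asymptotic to $2x$; the ratio defining the density then tends to $0$. For the denominator, write $S$ for the set of $c$ for which \eqref{eq1} is solvable. The result of \cite{ahl} bounds $\#\{c\in S:\ |c|\le x\}$ and in particular yields $\#\{c\in S:\ |c|\le x\}=o(x)$ (the true order being a power of $\log x$). Since $\#\{c\in{\mathbb Z}:\ |c|\le x\}=2x+O(1)$ and $H_0={\mathbb Z}\setminus S$, we obtain $\#\{c\in H_0:\ |c|\le x\}\sim 2x$.

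The heart of the argument is a uniform bound on how many residues the left-hand side of \eqref{eq1} can occupy modulo a well-chosen $m$. Fix an integer $m\ge 2$ with $\gcd(m,b_{ij})=1$ for all $i,j$; this is harmless, as the $b_{ij}$ have only finitely many prime factors. Then each $b_{ij}$ lies in $({\mathbb Z}/m{\mathbb Z})^{\ast}$, a group of exponent $\lambda(m)$, so $b_{ij}$ has some order $d_{ij}\mid\lambda(m)$. As the non-negative exponents $\alpha_{i1},\dots,\alpha_{i\ell}$ vary, the product $b_{i1}^{\alpha_{i1}}\cdots b_{i\ell}^{\alpha_{i\ell}}$ runs through at most $\prod_{j}d_{ij}\le\lambda(m)^{\ell}$ residues; multiplying by the fixed $a_i$ cannot increase this, so the $i$-th summand occupies at most $\lambda(m)^{\ell}$ residues and the whole left-hand side at most $\lambda(m)^{k\ell}$ residues modulo $m$. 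Denote this set of residues by $R_m$, so $\#R_m\le\lambda(m)^{k\ell}$. Now every $c\in H$ satisfies $c\bmod m\in R_m$, because \eqref{eq2} is solvable for this particular $m$ as well. Counting the integers in $[-x,x]$ that fall into a prescribed set of residue classes modulo $m$ gives
$$
\#\{c\in H:\ |c|\le x\}\ \le\ \#R_m\left(\frac{2x}{m}+1\right)\ \le\ \lambda(m)^{k\ell}\left(\frac{2x}{m}+1\right).
$$

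Dividing by the denominator and letting $x\to\infty$, the upper density of $H$ inside $H_0$ is at most $\lambda(m)^{k\ell}/m$, and this holds for every admissible modulus $m$. It remains to produce admissible moduli along which this ratio tends to $0$, that is, with $\lambda(m)=o\big(m^{1/(k\ell)}\big)$. This is exactly where the generalization of the theorem of Erd\H{o}s, Pomerance and Schmutz \cite{eps} enters: it supplies an infinite sequence of integers $m\to\infty$, coprime to $\prod_{i,j}b_{ij}$ (and, if one wishes, divisible by any prescribed $r$), for which $\lambda(m)$ is only polylogarithmic in $m$, so that $\lambda(m)^{k\ell}/m\to 0$. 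Hence the upper density of $H$ inside $H_0$ is $0$, and being non-negative the density exists and equals $0$.

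I expect the genuine difficulty to lie entirely in this last arithmetic input. The residue count is elementary group theory and the passage to counting residue classes is bookkeeping; by contrast, guaranteeing moduli with $\lambda(m)$ small relative to $m$ while simultaneously respecting the coprimality constraint (and the optional divisibility by $r$) is precisely the content that the \cite{eps}-type generalization must deliver.
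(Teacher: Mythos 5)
Your argument is correct in substance, but it takes a genuinely different route from the paper at two points, and one of its inputs is asserted rather than available. First, where you impose $\gcd(m,b_{ij})=1$ and count residues by pure group theory (each base has order dividing $\lambda(m)$, so the left-hand side of \eqref{eq1} occupies at most $\lambda(m)^{k\ell}$ classes), the paper instead invokes Lemma \ref{lem2} (Lemma 1 of \cite{ahl}): for an \emph{arbitrary} integer $b$, the powers $b^u$ occupy at most $\lambda(m)+\max_i\beta_i$ residues modulo $m=q_1^{\beta_1}\cdots q_z^{\beta_z}$, the extra term $\max_i\beta_i\le\log m/\log 2$ absorbing the pre-periodic part of the power sequence. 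This removes any coprimality requirement on the modulus, at the harmless cost of replacing your bound by $\bigl(\lambda(m)+\log m/\log 2\bigr)^{k\ell}$, still $m^{o(1)}$. Second, you run a two-step limit (fix an admissible $m$, let $x\to\infty$ to get upper density at most $\lambda(m)^{k\ell}/m$, then optimize over $m$), whereas the paper chooses $m=m(x)\le\sqrt{x}$ as large as possible subject to $\lambda(m)<(\log m)^{C_2\log\log\log m}$ and runs a single limit, writing $x=um+v$. Your version is cleaner for the qualitative density-zero statement; the paper's single-limit form is closer to a quantitative rate. For the denominator you use exactly the paper's Lemma \ref{lem1}, so that part coincides.

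The point you must not leave as an assertion is the coprime variant of the Erd\H{o}s--Pomerance--Schmutz input. Theorem \ref{thm2} guarantees moduli divisible by a prescribed $r$ with small $\lambda$; it says nothing about avoiding the primes dividing $\prod_{i,j}b_{ij}$, so as written your final step cites a statement that neither \cite{eps}, \cite{ht1}, nor the paper supplies. It is easily patched: if $\lambda(m)$ is small, then for each fixed prime $p$ with $p^a\,\|\,m$ one has $\lambda(p^a)\mid\lambda(m)$ and $\lambda(p^a)\ge p^{a-2}$, so $p$ divides $m$ to a power that is $O(\lambda(m))$; stripping from $m$ the finitely many primes dividing the $b_{ij}$ therefore costs only a factor $m^{o(1)}$ and does not increase $\lambda$, yielding the coprime moduli you need. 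Alternatively (the paper's choice) use Lemma \ref{lem2} and dispense with coprimality entirely. A cosmetic remark: $(\log m)^{C\log\log\log m}$ is not literally polylogarithmic, but it is $m^{o(1)}$, which is all your inequality $\lambda(m)^{k\ell}/m\to 0$ requires. With either repair, your proof is complete.
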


Note that Theorem \ref{thm1} obviously implies that $H$ has density zero inside $\mathbb Z$.

In the proof of Theorem \ref{thm1} the following result plays an important role. This statement is a variant of a theorem of Erd\H{o}s, Pomerance and Schmutz \cite{eps} and Hajdu and Tijdeman \cite{ht1}. The important difference is the extra requirement that the appropriate moduli should be divisible by a fixed number $r$. This relation will play an important role in our method.

Let $\lambda(m)$ be the Carmichael function of the positive
integer $m$, that is the least positive integer for which
$$
b^{\lambda(m)}\equiv 1 \pmod{m}
$$
for all $b\in \mathbb{Z}$ with $\gcd(b,m)=1$. Later, we shall need the following information on small values of the Carmichael function.

\begin{theorem}
\label{thm2}
There exist positive constants $C_1>1$ and $C_2$ such that for any integer $r$ and for every large integer $i$ there is an integer $m$ with $r\mid m$, such that
$$
\log m
\in [\log i+\log r, (\log i)^{C_1}+\log r]
$$
and
$$
\lambda(m)<r(\log m/r)^{C_2\log\log\log m/r}.
$$
\end{theorem}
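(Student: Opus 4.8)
The plan is to deduce Theorem \ref{thm2} from its $r=1$ special case, which is exactly the result of Erd\H{o}s, Pomerance and Schmutz \cite{eps} in the form given by Hajdu and Tijdeman \cite{ht1}, by a short multiplicative argument. Concretely, the $r=1$ version asserts that with the same constants $C_1>1$ and $C_2$, for every large $i$ there is an integer $n$ with
\[
\log n\in[\log i,(\log i)^{C_1}]
\qquad\text{and}\qquad
\lambda(n)<(\log n)^{C_2\log\log\log n}.
\]
I would first recall the construction behind this statement: $n$ is taken to be a product of distinct primes $p$ for which $p-1$ is composed only of small prime factors (say $p-1\mid L$ for a suitably chosen, highly divisible $L$). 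For such $n$ one has $\lambda(n)=\operatorname{lcm}_{p\mid n}(p-1)\mid L$, so $\lambda(n)$ stays small while $n=\prod p$ can be made large; by adjoining the eligible primes one at a time, $\log n$ can be steered into the whole target interval $[\log i,(\log i)^{C_1}]$.

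The one modification needed for general $r$ is a coprimality requirement. I would run the construction above but forbid the finitely many primes dividing $r$ from entering the product, so that the resulting $n$ satisfies $\gcd(n,r)=1$. Deleting a fixed finite set of primes changes $\log n$ by at most the bounded quantity $\sum_{p\mid r}\log p\le\log r$ and does not affect the asymptotic supply of primes $p$ with $p-1\mid L$; hence for all large $i$ one can still place $\log n$ anywhere in $[\log i,(\log i)^{C_1}]$, while the bound $\lambda(n)<(\log n)^{C_2\log\log\log n}$ is untouched.

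Finally I would set $m=rn$. Then $r\mid m$ and, since $\gcd(r,n)=1$,
\[
\lambda(m)=\operatorname{lcm}(\lambda(r),\lambda(n))\le\lambda(r)\,\lambda(n)\le r\,\lambda(n),
\]
using $\lambda(r)\le\varphi(r)\le r$. Because $\log n=\log(m/r)$, substituting the bound on $\lambda(n)$ gives
\[
\lambda(m)<r\,(\log(m/r))^{C_2\log\log\log(m/r)},
\]
while $\log m=\log r+\log n$ lands in $[\log i+\log r,(\log i)^{C_1}+\log r]$, exactly as required.

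I expect the genuine content of the argument to sit entirely in the $r=1$ construction, namely in guaranteeing that the prime product $n$ can be tuned to hit the full interval $[\log i,(\log i)^{C_1}]$; this is the Erd\H{o}s--Pomerance--Schmutz / Hajdu--Tijdeman input and the main obstacle. Once that flexibility is granted, the coprimality adjustment and the passage to $m=rn$ are routine: the only things to verify are that removing the primes dividing $r$ costs nothing asymptotically and that $\gcd(r,n)=1$ makes $\lambda$ behave multiplicatively, which together produce the extra factor $r$ cleanly.
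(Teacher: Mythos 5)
Your reduction is in essence the same as the paper's: take the $r=1$ case (Theorem 5 of \cite{ht1}, going back to \cite{eps}) as a black box, set $m=rn$, and check that the interval for $\log m$ and the bound on $\lambda(m)$ transform as required. But you insert a detour that is both unnecessary and the one genuinely unproven step in your write-up. You demand $\gcd(n,r)=1$ so that $\lambda(rn)=\operatorname{lcm}(\lambda(r),\lambda(n))\le r\lambda(n)$, and to secure this you propose re-running the Erd\H{o}s--Pomerance--Schmutz construction with the primes dividing $r$ forbidden, asserting that this ``costs nothing asymptotically'' and still lets $\log n$ be steered through the whole interval $[\log i,(\log i)^{C_1}]$ with the same constants $C_1,C_2$. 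That assertion is plausible, but it is exactly what you do not verify: it requires opening up the proof in \cite{eps}/\cite{ht1} rather than citing its statement, and in particular one must check that deleting the primes dividing $r$ does not disturb the density of usable primes $p$ with $p-1\mid L$ in the ranges where the construction needs them. As written, your argument is incomplete at that point.

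The paper shows the detour is avoidable: for \emph{all} positive integers $a,b$ one has the elementary inequality $\lambda(ab)\le a\,\lambda(b)$, with no coprimality hypothesis (split $ab$ into the prime powers whose prime divides $b$ and the rest, and use $\lambda(p^{f+g})\mid p^{g}\lambda(p^{f})$ for $f\ge 1$ together with $\lambda(a')\le a'$ for the complementary part $a'\mid a$). With this, the paper takes the $n$ furnished verbatim by Theorem 5 of \cite{ht1}, sets $m=rn$, and concludes $\lambda(m)\le r\lambda(n)<r(\log (m/r))^{C_2\log\log\log (m/r)}$ immediately, with $\log m=\log r+\log n$ landing in the stated interval. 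If you replace your coprimality paragraph by this one-line inequality, your proof closes completely and coincides with the paper's.
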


Our next theorem provides numerical evidence for the Conjecture, for various settings.

\begin{theorem}
\label{thm3}
Let $c$ be an integer with $0\leq c\leq 1000$. Then the Conjecture is valid for the following cases of equation \eqref{eq1}:
\begin{enumerate}
\item $p_1^{\alpha_1}-p_2^{\alpha_2}=c$ and $p_1^{\alpha_1}+p_2^{\alpha_2}-p_3^{\alpha_3}=c$ where $p_1,p_2,p_3$ are distinct primes less than $100$,
\item $p_1^{\alpha_1}+\dots+p_{t-1}^{\alpha_{t-1}}-p_t^{\alpha_t}=c$
where $p_1<\dots<p_t$ are primes less than $30$ with $4\leq t\leq 8$,
\item $p_1^{\alpha_1}p_2^{\alpha_2}+p_3^{\alpha_3}p_4^{\alpha_4}
-p_5^{\alpha_5}p_6^{\alpha_6}=c$ where $p_1,p_2,p_3,p_4,p_5,p_6$ are the primes $2,3,5,7,11,13$ in some order,
\item $2^{\alpha_1}+3^{\alpha_2}+5^{\alpha_3}+7^{\alpha_4}+11^{\alpha_5}
+13^{\alpha_6}+17^{\alpha_7}+19^{\alpha_8}-23^{\alpha_9}=55191$.
\end{enumerate}
\end{theorem}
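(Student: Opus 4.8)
The plan is to treat Theorem \ref{thm3} as an exhaustive, self-certifying verification rather than a single closed-form argument. The crucial observation is that a modular obstruction simultaneously \emph{proves} unsolvability of \eqref{eq1} and \emph{verifies} the Conjecture: if for some $m\ge 2$ the congruence \eqref{eq2} has no solution, then \eqref{eq1} cannot be solvable (any integer solution would reduce modulo $m$), and the Conjecture holds for this $c$ by its very definition. Conversely, if \eqref{eq1} is solvable, the Conjecture is vacuously true. Hence for each fixed $c$ it suffices to exhibit \emph{either} an explicit solution of \eqref{eq1} \emph{or} a single modulus $m$ for which \eqref{eq2} is unsolvable.

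First I would observe that the mod-$m$ test is finite and effectively computable. For a base $b$ with $\gcd(b,m)=1$ the residue $b^{\alpha}\bmod m$ is purely periodic in $\alpha$ with period dividing $\lambda(m)$; allowing a short pre-period handles the non-coprime case (for a prime base $b=p_i$ one simply splits off the $p_i$-part of $m$). Consequently the set of residues attainable by the $i$-th term $a_i b_{i1}^{\alpha_{i1}}\cdots b_{i\ell}^{\alpha_{i\ell}}$ is a finite, computable subset $S_i\subseteq\mathbb{Z}/m\mathbb{Z}$, and \eqref{eq2} is solvable modulo $m$ if and only if $c$ lies in the sumset $S_1+\cdots+S_k$ inside $\mathbb{Z}/m\mathbb{Z}$. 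This reduces the test for any candidate $m$ to a bounded enumeration.

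The algorithm then runs, for each of the four families and each $c\in\{0,\dots,1000\}$, as follows. First search for a genuine solution of \eqref{eq1} over a moderate box of exponents; if one is found, record the vacuous case. Otherwise search for an obstructing modulus. Here Theorem \ref{thm2} guides the search: to keep the enumeration of the $S_i$ cheap one wants $\lambda(m)$ small, so one sweeps over moduli with small Carmichael value, while the divisibility $r\mid m$ permits insisting that useful factors divide $m$ and that $m$ stays coprime to the bases $b_{ij}$. For each candidate the finite test above is applied, and the first $m$ with $c\notin S_1+\cdots+S_k$ certifies the case.

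The main obstacle is completeness rather than correctness: there is no a priori guarantee that an obstructing $m$ exists for every $c$ making \eqref{eq1} unsolvable -- that is precisely the content of the Conjecture -- so the theorem is established only by the search actually succeeding in each of the finitely many cases. The genuine difficulty is therefore computational feasibility, since the cost of the sumset test grows with the product of the periods of the bases modulo $m$; this is why moduli with small $\lambda(m)$, supplied abundantly by Theorem \ref{thm2}, are essential, and why families (3)--(4), involving products of prime powers and up to nine terms, are the most demanding. Carrying out this exhaustive search with the Sage implementation, one confirms that in every case and for every $c$ in the stated range either a solution or an obstructing modulus is found, which proves the theorem.
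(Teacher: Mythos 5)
Your proposal is correct and follows essentially the same route as the paper: Theorem~\ref{thm3} is proved by a finite, self-certifying Sage-assisted search in which, for each $c$, one either exhibits an explicit solution of \eqref{eq1} (making the Conjecture vacuous) or an obstructing modulus $m$ assembled from prime powers whose $\lambda$-values are smooth, in the spirit of Erd\H{o}s--Pomerance--Schmutz and Theorem~\ref{thm2} (the paper finds, e.g., $\#L=224$ unsolvable values in case (3) and certifies $c=11$ with $m=2^4\cdot 3^2\cdot 17\cdot 19\cdot 37\cdot 73\cdot 97\cdot 577$). Two implementation details diverge from the paper, though neither affects correctness: the paper's moduli are deliberately \emph{not} coprime to the bases (the factors $2^4\cdot 3^2$ are powers of bases and immediately force conditions such as $\alpha_1=0$), so your suggestion to keep $m$ coprime to the $b_{ij}$ could hamper the search; and the paper explicitly avoids your direct sumset test modulo the full $m$ as ``highly not economic,'' instead working prime-power factor by factor, translating each into congruence conditions on the exponents via the orders $\mathrm{ord}_t(b_i)$ (with a greedy heuristic $f(t)=o_1\cdots o_6$ for choosing the next factor) and showing the combined system of exponent congruences is unsolvable.
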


\vskip.2truecm

\noindent
{\bf Remark.} The last equation in Theorem \ref{thm3} has no solutions, but it has solutions if $55191$ is replaced by any $c$ with $0\leq c<55191$.

\section{The application of the Conjecture to the explicit solution of exponential diophantine equations}

We propose the following principal strategy to find all solutions of equations of type \eqref{eq1}. For the moment, for simplicity assume that the equation has only finitely many solutions. We shall discuss the question that with what settings the strategy may work later.

\vskip.3truecm

\noindent
{\bf Principal strategy.}

\vskip.3truecm

\begin{itemize}

\item[(I)]{Find the suspected list of all solutions to equation \eqref{eq1} by an exhaustive search. [Note: Of course, at this point we cannot be sure that the list is complete. However, based upon the finiteness results concerning \eqref{eq1}, heuristically we may be strongly confident about it.]}
\item[(II)]{Choose one of the unknowns, $\alpha_{ij}$ say, and based upon the suspected list of all solutions take an integer $\alpha_0$ with $\alpha_{ij}<\alpha_0$. [Note: By choosing more than one unknowns we can speed up the calculations in an obvious way. However, to keep the presentation at this point simple, now we work only with one exponent.}
\item[(III)]{Instead of equation \eqref{eq1} consider the equation obtained by replacing the coefficient $a_i$ with $a_i b_{ij}^{\alpha_0}$. [Note: If our suspected list contained all solutions to \eqref{eq1} indeed, then the new equation has no solutions in non-negative integer exponents.]}
\item[(IV)]{Find an $m$ such that the new equation has no solution modulo $m$. Having such an $m$, conclude that $\alpha_{ij}<\alpha_0$ holds for all solutions of \eqref{eq1}.} [Note: If the Conjecture is true, then such a modulus exists. One can try to construct an appropriate $m$ by the help of Theorem \ref{thm2} (and its proof). Observe that for the unsolvability of the congruence modulo $m$ the relation $r:=b_{ij}^{\alpha_0}\mid m$ should hold, hence the importance of this property comes from.]
\end{itemize}

Observe that though the strategy contains heuristic points, once we succeed to find an appropriate modulus $m$ in the last step, it is justified that the original equation \eqref{eq1} has no solutions with $\alpha_{ij}\geq \alpha_0$. Hence we could get rid of an unknown, and we can repeat the whole procedure for an equation in one less variables than the original one. Finally, if everything works out well, we get all solutions.

This strategy works, at least in principle, if there exists a (not at all preliminary computable) constant $A$, such that for all solutions of \eqref{eq1} we have $\min\limits_{1\leq i\leq k,1\leq j\leq\ell} \alpha_{ij}<A$. (Since then we can eliminate one of the unknowns by the above method, etc.) This is the case, for example, if \eqref{eq1} has no solution with vanishing subsum.

At this point we mention that one can find in the literature several sparse results of this type; see e.g. the papers \cite{bf,af1,af2} and the references there. However, in these papers the appropriate moduli are found in a rather ad-hoc way, at least no clear strategy is explained to choose them. In our results we could use the moduli provided by Theorem \ref{thm2}. We give a detailed explanation in the proofs of our forthcoming theorems.

We illustrate our method by applying it to three branches of problems. In each case, we give two types of results. The first one always only shows that in the equations considered, one of the exponents can be bounded. To solve these equations completely one should iterate the method. The second type is where this iteration is executed, and the complete solution of a particular equation is presented.

Our next result concerns the representation of $c=0$ in \eqref{eq1} as sums and differences of powers of several distinct primes. Note that this result is closely related to a question of Brenner and Foster \cite{bf}.

\begin{theorem}
\label{thm4}
\hspace{1mm}
\begin{enumerate}
\item Let $3\le t\le 6$ and let $p_1,\ldots,p_t$ be distinct primes with $p_i\le 19$ $(i=1,\ldots,t)$. Then for the non-negative integer solutions $\alpha_1,\dots,\alpha_t$ of the equation
$$
p_1^{\alpha_1}+\dots+p_{t-1}^{\alpha_{t-1}}-p_t^{\alpha_t}=0
$$
we have $\min\limits_{1\leq i\leq t}\alpha_i\le 15$.
\item The equation
$$
3^{\alpha_1}+5^{\alpha_2}+11^{\alpha_3}+13^{\alpha_4}+ 17^{\alpha_5}-{19^{\alpha_6}}=0
$$
has only two solutions in non-negative integers $\alpha_1,\dots,\alpha_6$, given by
$$
(\alpha_1,\alpha_2,\alpha_3,\alpha_4,\alpha_5,\alpha_6)
=(0,1,1,0,0,1),(1,0,0,1,0,1).
$$
\end{enumerate}
\end{theorem}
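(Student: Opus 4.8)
The plan is to derive both parts as rigorous applications of the Principal strategy of Section~3, converting the heuristic steps (I)--(IV) into proofs by actually exhibiting, in each case, a modulus $m$ that witnesses unsolvability. For part (1) I would first observe that only finitely many equations occur: the primes below $19$ are $2,3,5,7,11,13,17,19$, and for each $t\in\{3,4,5,6\}$ one selects the $t$ primes and singles out the subtracted prime $p_t$, giving $\binom{8}{t}\,t$ cases. Fix one such case. Since $\min_i\alpha_i\le 15$ is precisely the statement that there is no solution with $\alpha_i\ge 16$ for \emph{every} $i$ simultaneously, I would apply step (III) with $\alpha_0=16$ to each term at once, passing to the equation
\[
p_1^{16}p_1^{\beta_1}+\dots+p_{t-1}^{16}p_{t-1}^{\beta_{t-1}}-p_t^{16}p_t^{\beta_t}=0,\qquad \beta_i\ge 0,
\]
whose solvability (via $\alpha_i=16+\beta_i$) is equivalent to that of the original equation restricted to $\alpha_i\ge 16$. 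Then, as in step (IV), I would look for a modulus $m$ with $r=(p_1\cdots p_t)^{16}\mid m$ for which the associated congruence \eqref{eq2} (with $c=0$) has no solution.

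The point that makes step (IV) a genuine proof is finiteness of the check: because $p^{\alpha}\bmod m$ is eventually periodic, the set of residues attained by each boosted term is finite and explicitly computable, so the non-solvability of the congruence is decided by inspecting finitely many residue combinations. Once this succeeds, the boosted equation—and hence the original equation with all $\alpha_i\ge16$—has no solution, which is exactly $\min_i\alpha_i\le15$. This is where Theorem~\ref{thm2} enters in an essential way: it guarantees a supply of moduli divisible by the prescribed $r$ whose Carmichael value $\lambda(m)$ is as small as possible, which is what keeps the number of residues per term, and thus the size of the verification, within reach.

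For part (2) I would iterate the same bounding. Part (1), applied to $(3,5,11,13,17,19)$, already gives $\min_i\alpha_i\le15$. I would then repeatedly use steps (II)--(IV) to bound one further exponent, substitute each admissible value of an already-bounded exponent back into the equation, and so reduce to finitely many equations in fewer unknowns (a fixed $p_i^{\alpha_i}$ becoming a constant summand, or $-19^{\alpha_6}$ becoming a constant right-hand side). Recursing, each branch either collapses to a two- or three-term equation that can be resolved directly, or is killed by a single congruence; collecting the finitely many surviving tuples should leave exactly the two solutions listed. The threshold phenomenon recorded for $55191$ in Theorem~\ref{thm3}(4) suggests the moduli and the case tree involved here are of a comparable, manageable size.

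The main obstacle in both parts is the construction of the witnessing moduli. Their very existence is what the Conjecture predicts but does not supply; what saves the argument is that no general truth of the Conjecture is needed—only one concrete $m$ in each of finitely many cases—and Theorem~\ref{thm2}, together with its proof, gives a systematic, divisibility-controlled way to generate candidates. The delicate feature is that boosting all exponents forces a large prescribed $r$, so producing moduli that are simultaneously divisible by $r$ and have small $\lambda(m)$ is exactly the non-trivial input of Theorem~\ref{thm2}. A secondary difficulty, specific to part (2), is bookkeeping: verifying that the tree of substitutions is exhaustive so that no solution is lost, and that every leaf is genuinely resolved. Both difficulties are computational rather than conceptual, and are handled by the Sage implementation referred to in the Introduction.
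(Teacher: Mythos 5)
Your proposal is correct and follows essentially the same route as the paper: both parts are treated as instances of the Principal strategy, with explicitly exhibited moduli (constructed in the spirit of Theorem \ref{thm2}, respecting the divisibility by the shifted prime powers) certifying unsolvability of each boosted congruence, followed by iterated elimination of exponents down to two-term equations, the verification in each case being a finite computation. The only difference is one of economy, not of method: for part (2) the paper's first step shifts all exponents simultaneously by small amounts read off from the suspected solution list (shifts of $2,2,2,2,1,2$, yielding at once that some exponent is at most $1$, in fact $\alpha_5=0$) instead of invoking the crude bound $15$ and branching over up to sixteen values as you suggest, and accordingly its witnessing moduli need not be divisible by the full product $(p_1\cdots p_t)^{16}$ that you prescribe.
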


The following theorem concerns the case where all but one primes are equal. Obviously, in this case the exponents of these primes can be arranged in a non-decreasing way. Further, the smallest exponent must always be zero, that is why the constant $1$ appears on the left hand side.

\begin{theorem}
\label{thm5}
\hspace{1mm}
\begin{enumerate}
\item Let $3\le t\le 9$ and let $p,q$ be distinct primes with $p,q\le 19$. Then for the non-negative integer solutions $\alpha_1,\dots,\alpha_t$ of the equation
$$
1+p^{\alpha_1}+\dots+p^{\alpha_{t-1}}-q^{\alpha_t}=0
$$
we have $\min\limits_{1\leq i\leq t}\alpha_i\le 6$.
\item The diophantine equation
$$
1+5^{\alpha_1}+5^{\alpha_2}+5^{\alpha_3}+5^{\alpha_4} +5^{\alpha_5}+5^{\alpha_6}+5^{\alpha_7}+ 5^{\alpha_8}-17^{\alpha_9}=0
$$
has only two solutions in non-negative integers $\alpha_1,\dots,\alpha_9$ with $\alpha_1\leq\dots\leq \alpha_8$, given by
$$
(\alpha_1,\alpha_2,\alpha_3,\alpha_4,\alpha_5, \alpha_6,\alpha_7,\alpha_8,\alpha_9)=
$$
$$
=(0,0,0,0,0,0,1,1,1),(0,0,0,1,1,2,3,3,2).
$$
\end{enumerate}
\end{theorem}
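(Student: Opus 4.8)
The plan is to run the Principal Strategy of the preceding section, using Theorem~\ref{thm2} to manufacture the required moduli systematically and then discharging a finite verification by computer.

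For part~(1) I would first reformulate the assertion: the inequality $\min_i \alpha_i \le 6$ is equivalent to saying that the equation has no solution with $\alpha_i \ge 7$ for \emph{every} $i$. Hence it suffices to exhibit, for each pair of distinct primes $p,q \le 19$ and each $t$ with $3 \le t \le 9$, a single modulus $m$ for which the congruence
$$
1 + p^{\alpha_1} + \dots + p^{\alpha_{t-1}} - q^{\alpha_t} \equiv 0 \pmod{m}
$$
is unsolvable subject to $\alpha_1,\dots,\alpha_t \ge 7$. Following step~(IV) of the strategy, I would seek such an $m$ among multiples of $r=(pq)^7$; by Theorem~\ref{thm2} these exist with $\lambda(m)$ small relative to $m$, which is precisely what makes the verification feasible.

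The verification rests on the following observation. Arranging $p^7 \parallel m$ and $q^7 \parallel m$, for every $a \ge 7$ the residue $p^{a} \bmod m$ is periodic in $a$ with period dividing $\lambda(m)$, so as $a$ ranges over integers $\ge 7$ the power $p^{a}$ takes values in an explicitly computable set $S_p \subseteq \mathbb{Z}/m\mathbb{Z}$ of size at most $\lambda(m)$; the same holds for $q$, giving a set $S_q$. It then remains to check that
$$
1 + s_1 + \dots + s_{t-1} - u \not\equiv 0 \pmod{m}
$$
for all $s_1,\dots,s_{t-1} \in S_p$ and all $u \in S_q$. Because $\lambda(m)$ is small, the iterated sumset of the $t-1 \le 8$ copies of $S_p$ can be formed and tested against $S_q$ directly. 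Running this over the finitely many triples $(p,q,t)$ completes part~(1); the value $\alpha_0=7$ (hence the bound $6$) is just the smallest exponent for which a modulus passing this test can be located uniformly across all the cases.

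For part~(2), the $t=9$, $p=5$, $q=17$ instance, I would iterate. Part~(1) gives $\min_i\alpha_i\le 6$, and since $\alpha_1\le\dots\le\alpha_8$ the minimum equals $\min(\alpha_1,\alpha_9)$, so one of $\alpha_1,\alpha_9$ lies in $\{0,1,\dots,6\}$. Branching on which of the two it is and on its value turns the corresponding term into a constant, producing a new equation of the same shape in fewer unknowns but with a shifted constant term; re-applying the bounding step to each branch (each time via a fresh modulus from Theorem~\ref{thm2}) and recursing builds a finite case tree whose leaves are small enough to resolve by direct search, leaving exactly the two listed tuples. The main obstacle is twofold. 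Theoretically, the very existence of a finite modulus ruling out the all-large-exponent configurations is guaranteed only by the Conjecture, not proved, so the argument is a rigorous but computer-assisted verification rather than an unconditional result. Practically, the delicate point is keeping the recursion honest: each elimination changes the constant term and destroys the ``$1+\sum p^{\alpha}$'' structure, so at every node a modulus with $\lambda(m)$ small enough to keep the sumset test from exploding must be found anew, and it is here that the small-values-of-$\lambda$ input of Theorem~\ref{thm2}, together with the freedom to prescribe the divisor $r$, is essential.
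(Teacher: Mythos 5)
Your proposal is correct and takes essentially the same route as the paper: it is the Principal strategy, with part (1) handled by shifting all exponents (by $7$) and exhibiting a modulus $m$ divisible by the shifted prime powers, built via Theorem \ref{thm2} so that $\lambda(m)$ is small enough for the finite congruence check, and part (2) handled by branching on the bounded exponent and recursing with fresh moduli, which is exactly the successive-elimination scheme the paper executes with the single modulus $m=2\cdot 3\cdot 5^2\cdot 7\cdot 13\cdot 31\cdot 601$ (reused to get $\alpha_1=\alpha_2=\alpha_3=0$ and $\alpha_4\le 1$, then branching). One small correction to your closing caveat: once the moduli are exhibited and the congruences verified, the theorem is unconditional --- the Conjecture is needed only to guarantee a priori that suitable moduli exist, not for the validity of the conclusion.
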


Our final result concerns the case $\ell=2$ in \eqref{eq1}.

\begin{theorem}
\label{thm6}
\hspace{1mm}
\begin{enumerate}
\item Let $p_1,\ldots,p_6$ be distinct primes with $p_i\le 19$ $(i=1,\ldots,6)$. Then for the non-negative integer solutions $\alpha_1,\dots,\alpha_6$ of the equation
$$
p_1^{\alpha_1}p_2^{\alpha_2} + p_3^{\alpha_3}p_4^{\alpha_4} - p_5^{\alpha_5}p_6^{\alpha_6} =1
$$
we have $\min\limits_{1\leq i\leq 6}\alpha_i\le 5$.
\item The equation
$$
2^{\alpha_1}3^{\alpha_2} + 5^{\alpha_3}7^{\alpha_4} - 11^{\alpha_5}13^{\alpha_6} =1
$$
has only two solutions in non-negative integers $\alpha_1,\dots,\alpha_6$, given by
$$
(\alpha_1,\alpha_2,\alpha_3,\alpha_4,\alpha_5,\alpha_6)
=(0,0,0,0,0,0), (0,2,1,0,0,1).
$$
\end{enumerate}
\end{theorem}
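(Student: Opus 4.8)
The plan is to handle both parts by the principal strategy of Section 3, using the moduli supplied by Theorem~\ref{thm2} to make the hunt for a suitable $m$ systematic and finite. For part~(1), note first that only finitely many equations are involved: the bases $p_1,\dots,p_6$ form an ordered choice of six distinct primes from $\{2,3,5,7,11,13,17,19\}$, and the symmetries $p_1\leftrightarrow p_2$, $p_3\leftrightarrow p_4$, $p_5\leftrightarrow p_6$, together with the interchange of the two positive terms, cut this list down further. Hence it suffices to establish $\min_i\alpha_i\le 5$ for each representative equation. Fixing one and arguing by contradiction, suppose $\alpha_i\ge 6$ for every $i$. Writing $\alpha_i=6+\beta_i$ with $\beta_i\ge 0$ converts the equation into a new one of the same shape \eqref{eq1},
$$
(p_1p_2)^6\,p_1^{\beta_1}p_2^{\beta_2}+(p_3p_4)^6\,p_3^{\beta_3}p_4^{\beta_4}-(p_5p_6)^6\,p_5^{\beta_5}p_6^{\beta_6}=1,
$$
whose coefficients are now the sixth powers $(p_1p_2)^6$, $(p_3p_4)^6$, $-(p_5p_6)^6$; it is precisely this reduced equation that I want to show is unsolvable.

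Following step~(IV), I would search for a modulus $m$ for which the reduced equation has no solution in the sense of \eqref{eq2}. The decisive technical point is that we may insist on $m$ with a very small Carmichael value: by Theorem~\ref{thm2} there are, for any prescribed divisibility by a given $r$, infinitely many $m$ with $\lambda(m)$ tiny compared with $m$. For such an $m$ coprime to $p_1,\dots,p_6$, each power $p_i^{\beta_i}\bmod m$ runs through the cyclic subgroup generated by $p_i$ in $(\mathbb{Z}/m\mathbb{Z})^{*}$, whose order divides $\lambda(m)$; thus $p_i^{\beta_i}$ takes only $O(\lambda(m))$ distinct residues. Consequently the left-hand side attains only finitely many residues modulo $m$, which can be enumerated by letting each $\beta_i$ range over a full period. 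If $1$ does not occur among them, the reduced equation is unsolvable, whence $\min_i\alpha_i\le 5$; running through the finitely many representative equations then completes part~(1).

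For part~(2) one first locates the two candidate solutions by an exhaustive search over small exponents (step~(I)), and then applies the elimination of steps~(II)--(IV) repeatedly. Having bounded one exponent, say $\alpha_{ij}<\alpha_0$, one substitutes each admissible value $0\le\alpha_{ij}<\alpha_0$ and repeats the argument on the resulting equations in one fewer unknown, and so on. Each elimination again rests on exhibiting a modulus $m$ --- divisible by the appropriate power $b_{ij}^{\alpha_0}$, as required by the remark in step~(IV), and with $\lambda(m)$ small so that the verification is a finite computation --- modulo which the scaled equation has no solution. Once every exponent has been bounded, the finitely many surviving tuples are tested directly, leaving exactly the two listed solutions.

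The genuine difficulty lies entirely in step~(IV): establishing that a modulus $m$ with the required unsolvability property actually exists. Our Conjecture predicts this but is unproven, so in each concrete instance such an $m$ must be \emph{produced}. Theorem~\ref{thm2} does not by itself guarantee success; what it provides is the right to restrict the search to moduli with $\lambda(m)$ small and with the prescribed divisibility, which is exactly what renders the residues of the power terms enumerable and the search for $m$ effective. The proof is therefore ultimately computational: the existence of a working $m$ in every case --- and hence the unconditional validity of both parts --- is certified by the explicit search performed with the Sage implementation, with Theorem~\ref{thm2} ensuring that this search ranges over a tractable family of moduli. The iteration in part~(2) compounds this obstacle, since a fresh modulus must be secured at each stage of the elimination.
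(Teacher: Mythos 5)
Your overall route is the paper's: exhaustive search for the candidate solutions, then repeated elimination of exponents via steps (II)--(IV), with the witness moduli found by the systematic search based on Theorem \ref{thm2} and certified computationally in Sage. The paper's proof of part (2) is exactly this, made concrete: modulo $2$ it gets $\alpha_1=0$; then $m=2^5\cdot 7\cdot 17\cdot 19\cdot 37\cdot 73\cdot 97\cdot 193$ forces $\alpha_4=0$; $m=7\cdot 11\cdot 17\cdot 19\cdot 31\cdot 37\cdot 41\cdot 73\cdot 97\cdot 193$ forces $\alpha_5=0$; $m=5^2\cdot 7\cdot 11\cdot 31\cdot 41$ gives $\alpha_3\le 1$; and in the remaining case $3^{\alpha_2}-13^{\alpha_6}=-4$, the modulus $27\cdot 7\cdot 19\cdot 37$ gives $\alpha_2\le 2$, whence the second solution. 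Your description of part (2), including the insistence that each modulus be divisible by the shifted power $b_{ij}^{\alpha_0}$, is faithful to this.

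However, your argument for part (1) contains a genuine error: you restrict the search to moduli $m$ \emph{coprime} to $p_1,\dots,p_6$, and for such $m$ the sought witness can never exist. Every equation in part (1) has the trivial solution $\alpha_1=\dots=\alpha_6=0$, since $1+1-1=1$. If $\gcd(m,p_1\cdots p_6)=1$, choose each $\beta_i$ with $\beta_i\equiv -6 \pmod{\mathrm{ord}_m(p_i)}$; then every term of your reduced equation $(p_1p_2)^6p_1^{\beta_1}p_2^{\beta_2}+(p_3p_4)^6p_3^{\beta_3}p_4^{\beta_4}-(p_5p_6)^6p_5^{\beta_5}p_6^{\beta_6}$ is congruent to $1+1-1=1\pmod m$, so the reduced congruence is solvable modulo \emph{every} coprime $m$, and your search cannot terminate. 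This is precisely the point of the note in step (IV) of the Principal strategy and of the divisibility condition $r\mid m$ in Theorem \ref{thm2}: the useful moduli must share suitable prime powers with the shifted bases, as the paper's actual moduli illustrate ($7\mid m$ when bounding $\alpha_4$, $5^2\mid m$ when bounding $\alpha_3$, $3^3\mid m$ when bounding $\alpha_2$). Your part (1) argument would be repaired by dropping the coprimality assumption and instead requiring $m$ divisible by appropriate powers of the $p_i$, handling the terms with $\gcd(t,p_i)>1$ as in the proof of Theorem \ref{thm3}; as written, though, the step ``for such an $m$ coprime to $p_1,\dots,p_6$ \dots if $1$ does not occur among them'' describes an event that provably never happens.
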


\section{Proofs}

We start with the proof of Theorem \ref{thm2}.

\begin{proof}[Proof of Theorem \ref{thm2}]
Theorem 5 of \cite{ht1} is just the statement with $r=1$. So let $C_1$ and $C_2$ be the constants implied by Theorem 5 of \cite{ht1}, let $r$ be an arbitrary positive integer, and let $i$ be sufficiently large. Then by Theorem 5 of \cite{ht1} there exists an $n$ such that
$$
\log n
\in [\log i, (\log i)^{C_1}]\ \text{and}\
\lambda(n)<(\log n)^{C_2\log\log\log n}.
$$
Put $m:=rn$. Then obviously, $r\mid m$. Further, we immediately obtain
$$
\log m\in [\log i+\log r, (\log i)^{C_1}+\log r].
$$
Finally, as it is well-known, for any positive integers $a,b$ we have $\lambda(ab)\leq a\lambda(b)$. Hence
$$
\lambda(m)\leq r\lambda(n)<r(\log n)^{C_2\log\log\log n}=
r(\log m/r)^{C_2\log\log\log m/r},
$$
and the theorem follows.
\end{proof}

Now we continue with the proof of Theorem \ref{thm1}. For this, beside Theorem \ref{thm2} we need the following results of \'Ad\'am, Hajdu and Luca \cite{ahl}.

\begin{lemma}
\label{lem1}
Using the notation of Theorem \ref{thm1}, write $H_0(x)$ for the elements $h$ of $H_0$ with $|h|\leq x$ where $x$ is a positive real number. Then for all large $x$ we have
$$
\# H_0(x)>2x-C_3(\log x)^{C_4}
$$
where $C_3$ and $C_4$ are constants depending only on the parameters $k$, $a_i$ and $b_{ij}$ occurring in \eqref{eq1}.
\end{lemma}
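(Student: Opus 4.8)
The plan is to rephrase the lemma as an upper bound on the count of representable values and then to bound these by a polylogarithmic quantity. Since the integers $c$ with $|c|\le x$ number $2\lfloor x\rfloor+1$, and $H_0(x)$ consists of exactly those that are \emph{not} solvable, proving $\#H_0(x)>2x-C_3(\log x)^{C_4}$ is equivalent to showing that the number of solvable $c$ with $|c|\le x$ --- that is, the number of distinct values in $[-x,x]$ taken by the left-hand side $f=\sum_{i=1}^k a_i\prod_{j=1}^\ell b_{ij}^{\alpha_{ij}}$ --- is at most $C_3(\log x)^{C_4}+O(1)$. From here on I would work only with this counting problem.

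First I would record the building block: for each $i$, the term $T_i=a_i\prod_j b_{ij}^{\alpha_{ij}}$ ranges over a set containing $O((\log x)^\ell)$ values of absolute value at most $x$, because $|a_i|\prod_j|b_{ij}|^{\alpha_{ij}}\le x$ forces $\sum_j\alpha_{ij}\log|b_{ij}|\le\log x$, and the number of non-negative integer vectors satisfying a linear inequality of this shape is $O((\log x)^\ell)$ (bases equal to $\pm 1$ only inflate the count by a constant). Consequently, if we restrict attention to exponent tuples $(\alpha_{ij})$ in which \emph{every} term satisfies $|T_i|\le kx$, the number of resulting values of $f$ is at most $\prod_i O((\log x)^\ell)=O((\log x)^{k\ell})$. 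This already disposes of every value $c$ admitting a representation with no large term.

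The heart of the matter, and the step I expect to be the main obstacle, is to control the values produced by tuples in which some term is much larger than $x$; for such tuples $|f|\le x$ can occur only through cancellation. Here I would argue by induction on $k$, splitting the solvable values according to whether they possess a representation with a vanishing proper subsum. If $c$ has a representation in which some proper nonempty subset of terms sums to zero, then $c$ is already represented by the complementary equation, which has fewer than $k$ terms, so the inductive hypothesis bounds the number of such $c$ by $O((\log x)^{C_4'})$ after summing over the at most $2^k$ sub-equations. For the remaining values every representation is non-degenerate, and one must bound how large the exponents can be relative to $x$. This is where the deep input enters: for $k=2$ the effective estimates of Gy\H{o}ry via Baker's method force the two near-cancelling terms, and hence all exponents, to be $O((\log x)^{O(1)})$, while the general non-degenerate case is governed by the finiteness and boundedness results for $S$-unit equations of Evertse--Schlickewei--Schmidt and Amoroso--Viada cited in the introduction. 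The delicate point is that one needs a bound on the number of distinct \emph{values}, not merely on the number of solutions for a fixed $c$, so the counting must be arranged so that each non-degenerate value is charged to a bounded-exponent representative.

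Assembling the pieces, the bounded-term contribution, the inductively handled degenerate contribution, and the non-degenerate core are each of order $O((\log x)^{C_4})$ for a constant $C_4$ depending only on $k$ and $\ell$. Taking $C_4$ to be the maximum of the exponents arising and $C_3$ large enough to absorb the factor $2^k$ and all implied constants (which depend only on $k$, the $a_i$ and the $b_{ij}$), we obtain the required bound on the number of solvable values and hence the stated lower bound for $\#H_0(x)$. I would finally note that this is precisely the content of the theorem of \'Ad\'am, Hajdu and Luca, so within the paper the cleanest route is simply to quote their result, the argument above being a sketch of why it holds.
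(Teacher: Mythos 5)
Your closing remark is exactly the paper's proof: the paper disposes of Lemma \ref{lem1} in one sentence, as "a simple consequence of Theorem 1 of \cite{ahl}", so the operative argument you propose (quote \'Ad\'am--Hajdu--Luca) coincides with it.

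A caution, though, about the sketch you offer as justification: the step you yourself flag as delicate is not a loose end but the entire content of the cited theorem, and the mechanism you suggest for it would fail. The Evertse--Schlickewei--Schmidt and Amoroso--Viada results bound, uniformly in $c$ (divide the equation by $c$; the bound depends only on the number of terms and the rank of the group generated by the $b_{ij}$), the number of non-degenerate solutions for each \emph{fixed} $c$ --- this says nothing about how many values $c$ are attained. And bounding the exponents polylogarithmically in $x$ via Baker's method works only when the non-degenerate core has two terms; for three or more terms no such effective bound is known (cf.\ the introduction's remarks on Vojta \cite{v} and Bennett \cite{be}), so "all exponents are $O((\log x)^{O(1)})$" is not available. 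A complete argument must instead bound the \emph{number} of non-degenerate tuples with large cancellation, i.e.\ solutions of an S-unit inequality of the shape $\bigl|\sum_i T_i\bigr|<\bigl(\max_i|T_i|\bigr)^{1-\varepsilon}$, by a constant depending only on $k$ and $\ell$, each such tuple contributing at most one value of $c$. Since you defer to \cite{ahl} in the end, your proposal stands as a proof for the paper; as a standalone argument it has this genuine gap.
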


\begin{proof} The statement is a simple consequence of Theorem 1 of \cite{ahl}.
\end{proof}

We also need the following

\begin{lemma}
\label{lem2} Let $m=q_1^{\beta_1}\cdots q_z^{\beta_z}$ where
$q_1,\ldots,q_z$ are distinct primes, $\beta_1,\dots,\beta_z$ are positive integers, and let $b\in\mathbb Z$. Then we have
$$
\#\{b^u \pmod m : u\geq 0\}\le \lambda(m)+ \max\limits_{1\le i\le z}\alpha_i.
$$
\end{lemma}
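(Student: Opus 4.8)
The plan is to show that the sequence $b^0, b^1, b^2, \dots$ reduced modulo $m$ is eventually periodic, with a pre-period of length at most $\max_i \beta_i$ and an eventual period dividing $\lambda(m)$; counting these two ranges separately then gives exactly the stated bound. (I read the $\alpha_i$ appearing in the conclusion as the exponents $\beta_i$ in the factorization of $m$.)

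First I would analyze the powers of $b$ one prime power at a time and recombine via the Chinese Remainder Theorem, using the standard fact that $\lambda(m)=\mathrm{lcm}\bigl(\lambda(q_1^{\beta_1}),\dots,\lambda(q_z^{\beta_z})\bigr)$, so that $\lambda(q_i^{\beta_i})\mid\lambda(m)$ for every $i$. Fix an index $i$ and distinguish two cases. If $q_i\nmid b$, then $b$ is a unit modulo $q_i^{\beta_i}$, hence $b^{\lambda(m)}\equiv 1\pmod{q_i^{\beta_i}}$, and therefore $b^{u+\lambda(m)}\equiv b^u\pmod{q_i^{\beta_i}}$ for every $u\ge 0$. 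If instead $q_i\mid b$, then $q_i^{u}\mid b^{u}$, so as soon as $u\ge\beta_i$ we have $b^{u}\equiv 0\pmod{q_i^{\beta_i}}$, and in particular $b^{u+\lambda(m)}\equiv b^{u}\pmod{q_i^{\beta_i}}$ for all such $u$.

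Setting $u_0:=\max_{1\le i\le z}\beta_i$, both cases yield $b^{u+\lambda(m)}\equiv b^u\pmod{q_i^{\beta_i}}$ for every $u\ge u_0$ and every $i$, whence by the Chinese Remainder Theorem $b^{u+\lambda(m)}\equiv b^u\pmod{m}$ for all $u\ge u_0$. Consequently the tail $\{b^u\pmod m:\ u\ge u_0\}$ is periodic and coincides with $\{b^u\pmod m:\ u_0\le u<u_0+\lambda(m)\}$, contributing at most $\lambda(m)$ distinct residues, while the initial segment $\{b^u\pmod m:\ 0\le u<u_0\}$ contributes at most $u_0=\max_i\beta_i$ residues. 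Adding these two bounds completes the proof.

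The only genuine subtlety, and the step I would be most careful about, is the behavior at the primes $q_i$ dividing $b$: there the residues are not periodic from the start but stabilize to $0$, and one must verify that this stabilization has occurred by exponent $\beta_i$ (and not later), so that a single common shift by $\lambda(m)$ governs the \emph{unit} components and the \emph{nilpotent} components simultaneously. Once the uniform recurrence $b^{u+\lambda(m)}\equiv b^u\pmod m$ for $u\ge u_0$ is in place, the counting argument is immediate.
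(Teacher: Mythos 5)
Your proof is correct, and there is nothing in the paper to compare it against line by line: the paper does not prove this lemma at all, but simply cites Lemma~1 of \'Ad\'am, Hajdu and Luca \cite{ahl}. Your self-contained argument is the natural one (and is essentially the argument behind the cited lemma): decompose $m$ into prime powers, observe that on the unit components ($q_i\nmid b$) the sequence is purely periodic with period dividing $\lambda(m)$ since $\lambda(q_i^{\beta_i})\mid\lambda(m)$, that on the nilpotent components ($q_i\mid b$) the sequence stabilizes to $0$ no later than exponent $\beta_i$, and recombine via the Chinese Remainder Theorem to get the uniform recurrence $b^{u+\lambda(m)}\equiv b^u\pmod m$ for all $u\ge u_0:=\max_i\beta_i$; counting the pre-period (at most $u_0$ residues) and one full period window (at most $\lambda(m)$ residues) gives the bound. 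You were right to flag the two delicate points explicitly: the stabilization of the $q_i\mid b$ components by exponent $\beta_i$ (which holds since $q_i^u\mid b^u$), and the need for a single shift $\lambda(m)$ valid for all components simultaneously, which is exactly what $\lambda(m)=\mathrm{lcm}\bigl(\lambda(q_1^{\beta_1}),\dots,\lambda(q_z^{\beta_z})\bigr)$ provides. You also correctly diagnosed the notational slip in the statement: the $\alpha_i$ in the displayed inequality should be the exponents $\beta_i$ from the factorization of $m$. In short, your proof supplies in full the argument the paper outsources to \cite{ahl}.
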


\begin{proof} The statement is Lemma 1 in \cite{ahl}.
\end{proof}

Now we are ready to prove Theorem \ref{thm1}.

\begin{proof}[Proof of Theorem \ref{thm1}] For a positive real number $x$ set
$$
H(x):=\{h\in H:\ |h|\leq x\}\ \ \
\text{and}\ \ \
H_0(x):=\{h\in H_0:\ |h|\leq x\}.
$$
We apply Lemmas \ref{lem1} and \ref{lem2}, and Theorem \ref{thm2} with $r=1$ to prove our statement. Partly we follow the argument of Theorem 1 of \cite{ht2}; see also the proof of Theorem 3 in \cite{ahl}.

Throughout the proof, we assume that $x$ is large enough for the arguments to hold. By Theorem \ref{thm2} we can choose an integer $m$, satisfying $m\leq \sqrt{x}$ and
\begin{equation}
\label{neweq}
\lambda(m) < (\log m)^{C_2\log\log\log m}.
\end{equation}
We may assume that $m$ is the largest integer with these properties. Then by Theorem \ref{thm2} we have that $m>f(x)$, with some monotone increasing function $f$ of $x$, tending to infinity as $x$ goes to infinity.

Let $m=q_1^{\beta_1}\cdots q_z^{\beta_z}$ be the prime factorization of $m$, where $q_1,\ldots,q_z$ are distinct primes and $\beta_1,\ldots,\beta_z$ are positive integers.
Write $C(m)$ for the collection of the modulo $m$ residue classes of those integers $c$ for which the congruence \eqref{eq2} is solvable.
Lemma \ref{lem2} implies that we have
\begin{equation}
\label{ineq1} \# C(m) \le
(\lambda(m)+\max\limits_{1\le i\le z}\beta_i)^{k\ell}.
\end{equation}
On the other hand, by \eqref{neweq} we easily get that
\begin{equation}
\label{ineq2} \lambda(m)+\max\limits_{1\le i\le z}\beta_i\le (\log m)^{C_2\log\log\log m}+\frac{\log
m}{\log 2}.
\end{equation}
Now by inequalities \eqref{ineq1} and \eqref{ineq2} we get that
\begin{equation}
\label{ineq3}
\# C(m)<(\log m)^{C_5\log\log\log m},
\end{equation}
where $C_5$ is a constant depending only on $k$ and $\ell$.

Write now $x=um+v$ where $u$ is a positive integer and $v$ is a non-negative real number with $v<m$. Observe that by our choice of $m$, $u$ and $v$ we have that
$$
\log u\geq (\log (u+1))/2\geq (\log x-\log m)/2\geq (\log x)/4.
$$
Let now $\varepsilon$ be an arbitrary positive real number.
Then the above inequality implies
$$
\varepsilon x/3 \geq\varepsilon um/3>m>v.
$$
Further, we also have
$$
\varepsilon x/3>C_3(\log x)^{C_4}/2,
$$
where $C_3$ and $C_4$ are given in Lemma \ref{lem1}. Finally, the lower bound $m>f(x)$ also gives
$$
\varepsilon x/3\geq \varepsilon um/3>u(\log m)^{C_5\log\log\log m}.
$$
Thus, since by \eqref{ineq3} and $x=um+v$ we have that
$$
\# H(x)\leq 2(u(\log m)^{C_5\log\log\log m}+v)+1,
$$
the statement immediately follows by comparing the above inequality with
$$
\# H_0(x)>2x-C_3(\log x)^{C_4},
$$
given by Lemma \ref{lem1}.
\end{proof}

\begin{proof}[Proof of Theorem \ref{thm3}] Since the proofs of the parts (1) to (4) are similar, we only give details in case of (3). Moreover, here we consider only the equations
\begin{equation}
\label{spec3}
2^{\alpha_1}3^{\alpha_2}+5^{\alpha_3}7^{\alpha_4} -11^{\alpha_5}13^{\alpha_6}=c
\end{equation}
with $0\leq c\leq 1000$. First, letting the exponents $\alpha_i$ $(i=1,\dots,6)$ vary between $0$ and $12$, we find a list $L$ (with $\# L=224$) of $c$ values for which we expect equation \eqref{spec3} not to have solutions. (Note that some equations in (3) have solutions with $\max\limits_{1\leq i\leq 6} \alpha_i=12$.) At this stage, at least we are certain that for integers $c$ with $0\leq c\leq 1000$ not in the list $L$, equation \eqref{spec3} is solvable. Now we investigate the values $c\in L$ one by one. The smallest such value is $c=11$, we shall work only with this, the others can be handled similarly. Take the modulus
$$
m:=7031324575728=2^4\cdot 3^2\cdot 17\cdot 19\cdot 37\cdot 73\cdot 97\cdot 577.
$$
(Later we shall explain how to find this $m$.) Now we could simply say that as one can easily check, equation \eqref{spec3} has no solutions modulo $m$. However, as this check is not that easy for some of the instances in (1) to (4), it is worth to do it in a sophisticated way. (In particular, since the appropriate modulus $m$ can be much larger than the one given above.)

First observe that all the factors of $m$ have $\lambda$ values composed exclusively of $2$-s and $3$-s. (This is the choice indicated by the proof of Erd\H{o}s, Pomerance and Schmutz \cite{eps}.) This makes it possible to combine the information obtained for the coefficients $\alpha_1,\dots,\alpha_6$ modulo the separate factors. (It is highly not economic to work with $m$ as a modulus directly.) For example, modulo $2^4$ we immediately get that $\alpha_1=0$ must hold, and we also get some congruence conditions for the other exponents, modulo a power of $2$ (since the orders of all the factors modulo $2^4$ are certainly powers of $2$). Then, modulo $3^2$ we get further conditions on $\alpha_3$, $\alpha_4$, $\alpha_5$ and $\alpha_6$, modulo ord$_9(5)=6$, ord$_9(7)=2$, ord$_9(11)=6$ and ord$_9(13)=3$, respectively. Finally, using all the factors of $m$ as modulus, the resulting system of congruences obtained for the exponents $\alpha_1,\dots,\alpha_6$ proves to be non-solvable. This shows that equation \eqref{spec3} with $c=11$ has no solutions modulo $m$ indeed.

In all the other cases the proof goes along the same lines. In some cases one really needs to work with huge moduli. However, in all cases we encountered, the modulus
$$
m^*=2^4\cdot 3^2\cdot {\underset{3<p<20000}{\prod\limits_{p-1=2^u3^v5^w} p}}
$$
proved to be appropriate. That is, the $m$ we found was always a divisor of $m^*$.

Finally, we explain how we found the appropriate moduli $m$. In fact the outlined procedure in many cases could be simplified, e.g. starting with a shorter list of prime powers.

Let $M$ be the list of all prime power divisors of $m^*$. Consider an equation of the form
\begin{equation}
\label{newneweq}
b_1^{\alpha_1}b_2^{\alpha_2}+b_3^{\alpha_3}b_4^{\alpha_4} -b_5^{\alpha_5}b_6^{\alpha_6}=c
\end{equation}
(in all the other instances the procedure is similar).
Define a heuristic measurement $f(t)$ for the "goodness" of the elements $t$ of $M$, with respect to the bases $b_1,\dots,b_6$. We take the function $f(t)$ defined as
$$
f(t)=o_1o_2o_3o_4o_5o_6
$$
where $o_i=\text{ord}_t(b_i)$ $(i=1,\dots,6)$, with the convention $o_i=\text{ord}_t(b_i)=1$ if $\gcd(t,b_i)>1$.
Then we take the first $t$ from $M$ as modulus, for which $f(t)$ is minimal. By this modulus, we obtain some conditions for the exponents $\alpha_i$ modulo $o_i$ $(i=1,\dots,6)$. In particular, if $t$ is a power of the prime $b_i$, then we know that either $\alpha_i$ is smaller than the exponent of $b_i$ in $t$, or $b_i^{\alpha_i}\equiv 0\pmod{t}$.

For simplicity, suppose that $\gcd(b_i,t)=1$ $(i=1,\dots,6)$ and that we have
$$
\alpha_i\equiv \beta_i\pmod{o_i}\ \ \ (i=1,\dots,6),
$$
with some $\beta_i$ subject to $0\leq\beta_i<o_i$. Then we can rewrite equation \ref{newneweq} as
$$
a_1(b_1')^{\gamma_1}(b_2')^{\gamma_2}+ a_2(b_3')^{\gamma_3}(b_4')^{\gamma_4} -a_3(b_5')^{\gamma_5}(b_6')^{\gamma_6}=c
$$
with
$$
a_1=b_1^{\beta_1}b_2^{\beta_2},\ a_2=b_3^{\beta_1}b_4^{\beta_2},\ a_3=b_5^{\beta_1}b_6^{\beta_2},
$$
and
$$
b_i'=b_i^{o_i},\ \gamma_i=(\alpha_i-\beta_i)/o_i\ (i=1,\dots,6).
$$
Now we can apply the above method for this equation with $M$ replaced by $M\setminus \{t\}$, etc. In this way we could always guarantee that the next modulus $t$ is the actually "best", which makes the computation relatively fast.

Note that all the necessary exponentiations can be made locally, which keeps the procedure economic. The calculations have been performed by the program package Sage \cite{sage}.
\end{proof}

\begin{proof}[Proof of Theorem \ref{thm4}] We only deal with the second statement, since it asserts the complete solution of an equation. Deriving an upper bound for one of the exponents will be a part of our method, so part (1) of the theorem can be proved in a similar way.

First, according to (I) of our Principal strategy, we find a suspected list of all solutions of the equation
\begin{equation}
\label{mintaegy}
3^{\alpha_1}+5^{\alpha_2}+11^{\alpha_3}+13^{\alpha_4}+ 17^{\alpha_5}-19^{\alpha_6}=0
\end{equation}
with $0\leq \alpha_1,\ldots,\alpha_6<15$. We get only two solutions, namely
$$
(\alpha_1,\alpha_2,\alpha_3,\alpha_4,\alpha_5,\alpha_6) =(0,1,1,0,0,1),(1,0,0,1,0,1).
$$
So we strongly suspect that there are no other solutions.
Now, following steps (II) and (III) of the strategy, but shifting the powers of all bases, we consider the equation
$$
3^2\cdot 3^{\alpha_1'} + 5^2\cdot 5^{\alpha_2'} + 11^2\cdot 11^{\alpha_3'} + 13^2\cdot 13^{\alpha_4'} + 17\cdot 17^{\alpha'_5} - 19^2\cdot 19^{\alpha_6'}=0.
$$
If our list of two solutions is complete, then the above equation has no solutions in non-negative integers $\alpha_1',\dots,\alpha_6'$. To show this, we find a modulus $m$ such that the congruence
$$
3^2\cdot 3^{\alpha_1'} + 5^2\cdot 5^{\alpha_2'} + 11^2\cdot 11^{\alpha_3'} + 13^2\cdot 13^{\alpha_4'} + 17\cdot 17^{\alpha'_5} - 19^2\cdot 19^{\alpha_6'}\equiv 0\pmod{m}
$$
has no solutions. By a similar strategy as in the proof of Theorem \ref{thm3}, we find that
$$
m=2\cdot 3^2\cdot 5\cdot 7\cdot 13\cdot 17\cdot 19\cdot 37\cdot 73\cdot 109\cdot 163\cdot 433
$$
is an appropriate modulus. This means that in any solution of \eqref{mintaegy}, one of
$$
\alpha_1\leq 1,\ \ \alpha_2\leq 1,\ \ \alpha_3\leq 1,\ \ \alpha_4\leq 1,\ \ \alpha_5=0,\ \ \alpha_6\leq 1
$$
must be valid. This means that one of the exponents $\alpha_1,\dots,\alpha_6$ is fixed, and may take at most two values. We consider only the possibility $\alpha_1=0$, the other cases can be treated similarly. In this case our equation reads as
$$
1+5^{\alpha_2}+11^{\alpha_3}+13^{\alpha_4}+17^{\alpha_5} -19^{\alpha_6}=0.
$$
Based upon our previous calculations, we suspect that this new equation has the only solution
$$
(\alpha_2,\alpha_3,\alpha_4,\alpha_5,\alpha_6)=(1,1,0,0,1).
$$
Now similarly as above, but shifting only the exponent of $13$, if the conjecture is true, then there exists a modulus $m$ such that the congruence
$$
1+5^{\alpha_2}+11^{\alpha_3}+13\cdot 13^{\alpha_4'}+17^{\alpha_5} -19^{\alpha_6}\equiv 0\pmod{m}
$$
has no solutions modulo $m$. Now by the same method as previously, we get that the modulus
$$
m=2^4\cdot 3^2\cdot 7\cdot 13\cdot 37\cdot 73\cdot 109\cdot 433
$$
verifies this assertion. Hence we obtain that $\alpha_4=0$ must be valid, and our equation reduces to
$$
2+5^{\alpha_2}+11^{\alpha_3}+17^{\alpha_5}-19^{\alpha_6}=0.
$$

By our earlier calculations, we strongly suspect that this equation has the only solution
$$
(\alpha_2,\alpha_3,\alpha_5,\alpha_6)=(1,1,0,1).
$$
Shifting now the exponent of $17$ by one, the modulus
$$
m=2\cdot3\cdot7\cdot17\cdot37\cdot73\cdot97\cdot109\cdot163
$$
witnesses that the equation arising has no solutions - in other words, we must have $\alpha_5=0$.

Hence we obtain the equation
$$
3+5^{\alpha_2}+11^{\alpha_3}-19^{\alpha_6}=0,
$$
with the only expected solution
$$
(\alpha_2,\alpha_3,\alpha_6)=(1,1,1).
$$
Now we shift the exponent of $5$ to get the equation
$$
3+5^2\cdot 5^{\alpha_2'}+11^{\alpha_3}-19^{\alpha_6}=0,
$$
which turns out to have no solutions modulo
$$
m=3^3\cdot 5^2\cdot 7\cdot 31.
$$
This leaves us with the equations
$$
4+11^{\alpha_3}-19^{\alpha_6}=0\ \ \text{and}\ \
8+11^{\alpha_3}-19^{\alpha_6}=0.
$$
The first equation has no solutions modulo $3$. Further, we suspect that the only solution of the second equation is
$$
(\alpha_3,\alpha_6)=(1,1).
$$
Now we shift the exponent of $11$ to obtain the equation
$$
8+11^2\cdot 11^{\alpha_3'}-19^{\alpha_6}=0,
$$
which has no solutions modulo
$$
m=5^2\cdot 11^2 \cdot 31\cdot 61.
$$
This means that $\alpha_3=0$ or $\alpha_3=1$. From this we easily get that $\alpha_3=\alpha_6=1$ must be valid. By following similar arguments, we could solve all the encountered equations and we get that the solutions are those listed in the statement.
\end{proof}

\begin{proof}[Proof of Theorem \ref{thm5}] The proof of this theorem is very similar to that of Theorem \ref{thm4}, so we only indicate the main steps. Again, we only deal with part (2) of the statement, part (1) could be handled similarly.

After finding the suspected solutions
$$
(\alpha_1,\alpha_2,\alpha_3,\alpha_4,\alpha_5, \alpha_6,\alpha_7,\alpha_8,\alpha_9)=
$$
$$
=(0,0,0,0,0,0,1,1,1),(0,0,0,1,1,2,3,3,2),
$$
using the modulus
$$
m=2\cdot 3\cdot 5^2\cdot 7\cdot 13\cdot 31\cdot 601
$$
we successively get $\alpha_1=\alpha_2=\alpha_3=0$ and $\alpha_4\leq 1$.

In case of $\alpha_4=0$, using again $m$ we successively obtain $\alpha_5=\alpha_6=0$ and $\alpha_7=\alpha_8=1$, whence $\alpha_9=1$, and we obtain the first solution calculated preliminary. Note that in some of the above arguments, $m$ could be replaced by $m/5$ or $m/601$.

When $\alpha_4=1$, a similar calculation (but with rather more complicated moduli) leads to the second solution, and the theorem follows.
\end{proof}

\begin{proof}[Proof of Theorem \ref{thm6}] Again, we only deal with part (2) of the statement. Since the proof is very similar to the previous ones, we only give the moduli used, and the information deduced for the exponents.

We start with the equation
$$
2^{\alpha_1}3^{\alpha_2} + 5^{\alpha_3}7^{\alpha_4} - 11^{\alpha_5}13^{\alpha_6}=1.
$$
Then modulo $2$ we get that $\alpha_1=0$. So the equation reduces to
$$
3^{\alpha_2} + 5^{\alpha_3}7^{\alpha_4} - 11^{\alpha_5}13^{\alpha_6}=1.
$$
Now taking
$$
m=2^5\cdot 7\cdot 17\cdot 19\cdot 37\cdot 73\cdot 97\cdot 193
$$
we obtain that $\alpha_4=0$. Then our equation takes the form
$$
3^{\alpha_2} + 5^{\alpha_3} - 11^{\alpha_5}13^{\alpha_6}=1.
$$
Taking
$$
m=7\cdot 11\cdot 17\cdot 19\cdot 31\cdot 37\cdot 41\cdot 73\cdot 97\cdot 193
$$
we get that $\alpha_5=0$, that is, we have to solve
$$
3^{\alpha_2} + 5^{\alpha_3} - 13^{\alpha_6}=1.
$$
This equation modulo
$$
m=5^2\cdot 7\cdot 11\cdot 31\cdot 41
$$
yields that $\alpha_3\leq 1$. The equality $\alpha_3=0$ trivially leads to $\alpha_2=\alpha_6=0$. This gives the first solution. So we are left with the case $\alpha_3=1$, when the equation is of the shape
$$
3^{\alpha_2}-13^{\alpha_6}=-4.
$$
Then modulo
$$
m=27\cdot 7\cdot 19\cdot 37
$$
we get that $\alpha_2\leq 2$, which easily yields $\alpha_2=2$ and $\alpha_6=1$. Thus we get the second solution, and the theorem follows.
\end{proof}

\end{document}